\newtheorem{thm}{Theorem}[section]
\newtheorem{lem}[thm]{Lemma}
 \theoremstyle{definition}
\theoremstyle{remark}
\newtheorem{remark}[thm]{Remark}
\numberwithin{equation}{section}
\newcommand{\ben}{\begin{enumerate}}
\newcommand{\een}{\end{enumerate}}
\newcommand{\bit}{\begin{itemize}}
\newcommand{\eit}{\end{itemize}}
\newcommand{\h}{\hskip 0.1cm}
\newcommand{\hh}{\hskip 0.6cm}
\begin{document}

\title
{An inversion formula for the spherical mean transform with data on an ellipsoid in
two and three dimensions}

\vskip 10cm
\author{Yehonatan Salman \\ Bar-Ilan University}
\date{}

\maketitle

\begin{abstract}
In the articles [1] and [2] of D. Finch, M. Haltmeier, S. Patch and D. Rakesh inversion formulas
were found in any dimension $n\geq2$ for recovering a smooth function with compact support in the unit ball
from spherical means centered on the unit sphere. The aim of this article is to show that the methods
used in [1], [2] can be modified in order to get similar inversion formulas from spherical means centered on an ellipsoid
in two and three dimensional spaces.

{\bf Keywords}: Spherical means, Radon transform, inversion formula, inverse problems.
{\bf AMS classification:} 44A12, 65J22, 65M32
\end{abstract}

\section{Introduction and mathematical background}
\hh Let $n$ be an integer equals to 2 or 3. Denote by $S^{n - 1} = \{x\in\Bbb R^{n}:|x| = 1\}$ the unit sphere in $\Bbb R^{n}$. For a closed smooth bounded domain $\Omega$ denote by $\partial\Omega$ its boundary and by $C_{0}^{\infty}(\Omega)$ the class of infinitely differentiable functions defined on $\Bbb R^{n}$ which vanish outside $\Omega$. For any $n$ positive real numbers $a_{1},..., a_{n} > 0$ denote by $E = E_{a_{1},...,a_{n}}$ the standard solid ellipsoid in $\Bbb R^{n}$ with axes $a_{1},..., a_{n}$, that is $E = \{x\in\Bbb R^{n}:a_{1}^{-2}x_{1}^{2} + ... + a_{n}^{-2}x_{n}^{2} \leq 1\}$. Let $dS_{E}$ be the measure on $\partial E$ corresponding to the measure that comes from integration on the unit sphere, that is, in the two dimensional case we have
$$\int_{\partial E}f(p)dS_{E}(p)=\int_{-\pi}^{\pi}f(a_{1}\cos\theta,a_{2}\sin\theta)d\theta,$$
and in the three dimensional case
$$\int_{\partial E}f(p)dS_{E}(p) =
\int_{0}^{\pi}\int_{-\pi}^{\pi}f(a_{1}\sin\phi \cos\theta, a_{2}\sin\phi \sin\theta, a_{3}\cos\phi)\sin\phi d\theta d\phi.$$
The spherical mean transform $R$ takes any function $f\in C^{\infty}(\Bbb R^{n})$ to \vskip -0.3cm
$$(Rf)(x,r) = \int_{S^{n - 1}}f(x + r\omega)dS(\omega)$$
where $dS$ is the standard surface measure on $S^{n - 1}$.\\

The goal of this article is to recover the function $f\in C_{0}^{\infty}(E)$ from the spherical mean transform $Rf$ restricted to the surface $\partial E\times[0,\infty)$ where $E = E_{a_{1},...,a_{n}}$ and $a_{1},..., a_{n}$ are positive real numbers. That is, our goal is to recover $f$ from spherical means with arbitrary radius centered on $\partial E$. We will give inversion formulas that recover $f$ in the two and three dimensional case. Inversion formulas for the general case of an arbitrary ellipsoid in two or three dimensional space can be easily obtained from inversion formulas for the standard ellipsoid by using translations and rotations. The methods presented in this article are based on and generalize the methods presented in [1], [2].\\

The inversion problem for the spherical mean transform has been studied deeply recently due to its applications in
various mathematical, physical and scientific fields such as thermo and photoacoustic tomography, radar and sonar imaging, approximation theory and other areas ([6, 8, 11, 12, 14]). The inversion problem has been found out to be extremely important in thermoacoustic tomogrpahy (TAT for short). In TAT a short duration EM pulse (electro magnetic pulse) is sent through a biological object $\Omega$ which causes a thermoacoustic response from $\Omega$ resulting in a propagation wave $G(x,t)$ which describes the amount of energy absorbed at location $x$ and time $t\geq 0$. The transducers (the receivers that measure the pressure wave $G(x,t)$) are placed on the boundary $\partial\Omega$ of the biological object and from this data one has to recover the initial pressure wave $f(x) = G(x,0)$ on $\Omega$. Recovering $f$ can be useful in finding cancerous cells in the biological object $\Omega$ since they absorb several times more energy than ordinary cells ([6, 15]). The recovering of the initial pressure wave is strongly related to the inversion problem for the spherical mean transform since the amount of EM energy absorbed at location $x$ and time $t\geq0$ is the average energy that was generated on the sphere with center $x$ and radius $t$ at the initial time which is just the spherical mean transform $(Rf)(x,t)$ of the initial pressure wave.

\section{Main result and known results}

\hh From now on $a_{1},...,a_{n}$ will be fixed positive real numbers. The inversion formulas presented in this article differ in the two and three dimensional case. Both formulas are modifications for the elliptic case of Theorem 1.1 in [1] for the two dimensional case and of Theorem 3 in [2] for the three dimensional case. In the two dimensional case we have the following theorem
\begin{thm}
Let $E = E_{a_{1},a_{2}}\subset\Bbb R^{2}$ be the standard solid ellipsoid with axes $a_{1}, a_{2}$ and $f\in C_{0}^{\infty}(E)$, then for every $x\in E$,

\begin{equation}
f(x) = \left(\frac{1}{a_{1}^{2}}\frac{\partial^{2}}{\partial x_{1}^{2}} + \frac{1}{a_{2}^{2}}\frac{\partial^{2}}{\partial x_{2}^{2}}\right)\frac{1}{4\pi^{2}}\int_{\partial E}\int_{0}^{\infty}r(Rf)(p,r)\log\left|r^{2} - |x - p|^{2}\right|drdS_{E}(p).
\end{equation}

\end{thm}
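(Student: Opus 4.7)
My plan is to adapt the proof of Theorem~1.1 of [1] (which is exactly the case $a_1=a_2=1$ of the present statement) to the elliptical setting. The two features of (2.1) that encode the ellipse are the weighted operator $L := a_1^{-2}\partial_{x_1}^2 + a_2^{-2}\partial_{x_2}^2$ and the measure $dS_E$. Both are natural companions of the linear map $\Phi(y) := (a_1 y_1, a_2 y_2)$: if $x=\Phi(y)$, then $L$ becomes the ordinary Laplacian $\Delta_y$, and by its very definition $dS_E$ is the $\Phi$-pushforward of arc-length on $S^1$. Thus (2.1) can be read as a ``$\Phi$-pullback'' of the inversion formula of [1], and this analogy suggests the plan of attack.

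The essential obstacle is that $\Phi$ does not send Euclidean spheres to Euclidean spheres, so under $x=\Phi(y)$ the spherical mean $(Rf)(p,r)$ is \emph{not} a spherical mean of $g:=f\circ\Phi$; instead it becomes an elliptical mean. Hence (2.1) cannot be reduced to Theorem~1.1 of [1] by a change of variables. The proof must be carried out intrinsically in the $x$-variables, rerunning the argument of [1] while tracking how each use of spherical geometry on $S^1$ gets replaced by an algebraic identity on $\partial E$.

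Concretely, I would proceed in three steps mirroring [1]. First, commute $L$ past the outer integrals in (2.1) using the pointwise identity
\[
L_x \log|r^2-|x-p|^2| = -\frac{2(a_1^{-2}+a_2^{-2})}{r^2-|x-p|^2} - \frac{4\bigl[a_1^{-2}(x_1-p_1)^2 + a_2^{-2}(x_2-p_2)^2\bigr]}{(r^2-|x-p|^2)^2}.
\]
Second, following the manipulations of [1], perform integrations by parts in $r$ to reduce the singular $r$-integral to a distributional evaluation concentrated at $r = |x-p|$; here one uses that $(Rf)(p,0)=2\pi f(p)=0$ for $p\in\partial E$ (since $f\in C_0^\infty(E)$ vanishes on $\partial E$) and that $(Rf)(p,r)$ is compactly supported in $r$, so the $r$-boundary terms vanish. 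Third, carry out the remaining integration over $\partial E$, parametrised by $p(\theta)=(a_1\cos\theta,a_2\sin\theta)$, and use the defining identity $a_1^{-2}p_1^2 + a_2^{-2}p_2^2 = 1$ for $p\in\partial E$ (the elliptical analog of $|p|^2=1$ on $S^1$) to collapse the angular integral to $f(x)$.

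The hardest step is the third. In [1] the corresponding calculation is driven by the rotational symmetry of $S^1$: at $p\in S^1$ the outward unit normal coincides with $p$, and this geometric fact is what forces the various angular and boundary terms to cancel. On $\partial E$ this is no longer true --- at $p(\theta)=(a_1\cos\theta,a_2\sin\theta)$ the outward normal is proportional to $(a_1^{-1}\cos\theta,a_2^{-1}\sin\theta)$ rather than to $p(\theta)$. The weights $a_i^{-2}$ in the operator $L$ and the parametrisation measure $dS_E$ are precisely what is needed to restore the needed cancellation, and verifying this algebraic identity is the technical core of the proof.
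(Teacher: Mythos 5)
Your proposal correctly identifies the two structural ingredients (the weighted operator $L$ and the measure $dS_E$ as companions of the map $\Phi$), and correctly observes that a naive change of variables fails because $\Phi$ does not preserve Euclidean spheres. But as a proof it has a genuine gap: the entire mathematical content is deferred to your third step, which you describe only as ``verifying this algebraic identity'' without stating the identity or verifying it. The paper's proof hinges on one concrete lemma that your plan never isolates: writing the $r$-integral of $r(Rf)(p,r)$ as an area integral over $y$, one needs
$$\int_{\partial E}\log\left||x-p|^{2}-|y-p|^{2}\right|dS_{E}(p)=2\pi\log|A(x-y)|+C,\qquad A=\mathrm{diag}(a_{1},a_{2}),$$
with $C$ independent of $x,y$. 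This is elementary once you notice that $|x-p|^{2}-|y-p|^{2}=|x|^{2}-|y|^{2}-2p\cdot(x-y)$ is \emph{affine} in $p$, that for $p=(a_{1}\cos\theta,a_{2}\sin\theta)$ one has $p\cdot(x-y)=(\cos\theta,\sin\theta)\cdot A(x-y)$, and that $\int_{-\pi}^{\pi}\log|\cos\alpha-\cos\theta|\,d\theta$ is a constant independent of $\alpha$ when $|\cos\alpha|\leq 1$ (the bound $|\,|x|^{2}-|y|^{2}|\leq 2|A(x-y)|$ for $x,y\in E$ is what makes this applicable). With this lemma the theorem follows in two lines, since $\frac{1}{2\pi}\log|A(x-y)|$ is, after the substitution built into $L$, the fundamental solution of the Laplacian. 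None of this appears in your plan, and no geometric cancellation involving normals to $\partial E$ is needed.

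There is also a technical flaw in your first step: you propose to commute $L$ past the integrals using the pointwise formula for $L_{x}\log|r^{2}-|x-p|^{2}|$, but that expression contains the term $(r^{2}-|x-p|^{2})^{-2}$, which is not locally integrable in $r$ near $r=|x-p|$, so differentiation under the integral sign is not justified as stated; the boundary terms you control ($r=0$ and $r\to\infty$) are not where the danger lies. The paper sidesteps this entirely by integrating over $p$ and $y$ \emph{first}, obtaining the smooth-in-$x$ (away from the diagonal) convolution kernel $2\pi\log|A(x-y)|+C$, and only then applying $L$ in the distributional sense via the fundamental-solution identity. If you insist on differentiating first, you must reinterpret the resulting divergent integrals as principal values or distributions, which your outline does not do.
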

In the three dimensional case we have the following theorem

\begin{thm}
Let $E = E_{a_{1},a_{2},a_{3}}\subset\Bbb R^{3}$ be the standard solid ellipsoid with axes $a_{1}, a_{2}, a_{3}$ and $f\in C_{0}^{\infty}(E)$, then for every $x\in E$,
\begin{equation}
f(x) = -\left(\frac{1}{a_{1}^{2}}\frac{\partial^{2}}{\partial x_{1}^{2}} + \frac{1}{a_{2}^{2}}\frac{\partial^{2}}{\partial x_{2}^{2}} + \frac{1}{a_{3}^{3}}\frac{\partial^{2}}{\partial x_{3}^{2}}\right)\frac{1}{4\pi^{2}}\int_{\partial E}\int_{0}^{\infty}r^{2}(Rf)(p,r)\delta\left|r^{2} - |x - p|^{2}\right|drdS_{E}(p),
\end{equation}
where $\delta$ is the Dirac delta function.
\end{thm}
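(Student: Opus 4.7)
The plan is to adapt the proof of Theorem 3 of [2] for the unit sphere to the ellipsoidal geometry of $\partial E$. A direct reduction by the affine rescaling $\Phi\colon y\mapsto(a_{1}y_{1},a_{2}y_{2},a_{3}y_{3})$ is not available: although $\Phi$ carries the unit ball $B$ to $E$ and converts the weighted operator $L_{x}:=\sum_{i}a_{i}^{-2}\partial_{x_{i}}^{2}$ into the ordinary Laplacian $\Delta_{y}$, the spherical-mean transform $Rf$ does not pull back to the spherical-mean transform of $f\circ\Phi$, since $\Phi$ stretches spheres into ellipsoids. So one has to rerun the argument of [2] inside $E$.

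I would begin by simplifying the right-hand side of $(1.2)$. The distributional identity $\delta(r^{2}-s^{2})=\frac{1}{2s}\delta(r-s)$, valid for $s>0$, collapses the inner radial integral to an evaluation at $r=|x-p|$ and reduces the claim to
$$f(x) = -\frac{1}{8\pi^{2}}\,L_{x}\int_{\partial E}|x-p|\,(Rf)(p,|x-p|)\,dS_{E}(p).$$
Setting $u(p,t):=\frac{t}{4\pi}(Rf)(p,t)$, which is precisely the Kirchhoff--Poisson solution of the wave equation $u_{tt}=\Delta_{p}u$ with $u(\cdot,0)=0$ and $u_{t}(\cdot,0)=f$, the right-hand side becomes $-\frac{1}{2\pi}L_{x}V(x)$, where $V(x):=\int_{\partial E}u(p,|x-p|)\,dS_{E}(p)$. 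The theorem is therefore equivalent to the identity $L_{x}V(x)=-2\pi f(x)$ for $x$ in the interior of $E$.

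Expanding $u$ by the Kirchhoff representation and interchanging the order of integration rewrites $V$ as a convolution against a kernel,
$$V(x) = \frac{1}{4\pi}\int_{E} f(y)\,K(x,y)\,dy, \qquad K(x,y) := \int_{\partial E}\frac{\delta(|y-p|-|x-p|)}{|x-p|}\,dS_{E}(p),$$
so the theorem reduces to the distributional identity $L_{x}K(x,y)=-8\pi^{2}\delta(x-y)$ for $x,y$ in the interior of $E$. When $a_{1}=a_{2}=a_{3}=1$ this is the key kernel identity underlying Theorem 3 of [2]. I would split the proof into two parts: (a) $L_{x}K(x,y)=0$ for $x\neq y$, which one verifies by differentiating under the integral, using $\partial_{x_{i}}|x-p|=(x_{i}-p_{i})/|x-p|$ and the d'Alembertian structure of the integrand, then converting $x$-derivatives into tangential $p$-derivatives on the closed surface $\partial E$ and integrating by parts (no boundary terms appear, since $\partial E$ has no boundary); (b) the singular contribution at $x=y$, which is computed by localising near the diagonal, parametrising the locus $\{p\in\partial E:|y-p|=|x-p|\}$ --- the intersection of the perpendicular bisector of the segment $xy$ with $\partial E$ --- and extracting the leading-order term by a coarea / stationary-phase analysis as $y\to x$.

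The main obstacle is part (b): showing that the singular coefficient is the universal constant $-8\pi^{2}$, independent of the semi-axes $a_{1},a_{2},a_{3}$. In the spherical case of [2] this is forced by the $SO(3)$-invariance of $S^{2}$, but on $\partial E$ the symmetry is lost, and the identification requires a careful computation in local coordinates on $\partial E$ showing that the anisotropy introduced by the ellipsoidal measure $dS_{E}$ is exactly compensated by the weights $a_{i}^{-2}$ in $L_{x}$. This cancellation is the reason the inversion formula uses the weighted Laplacian rather than $\Delta_{x}$, and verifying it is the technical heart of the argument.
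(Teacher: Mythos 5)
Your reductions in the first two paragraphs are sound and run parallel to the opening of the paper's own proof: the paper likewise converts the radial integral into a volume integral, interchanges the order of integration, and arrives at $\int_{\Bbb R^{3}}f(y)K(x,y)\,dy$ with kernel $\int_{\partial E}\delta\left(|y-p|^{2}-|x-p|^{2}\right)dS_{E}(p)$, which differs from your $K$ only by the factor $2$ coming from $\delta(s-t)=2t\,\delta(s^{2}-t^{2})$. The genuine gap is that everything after that point is a plan rather than a proof. Both halves of your kernel identity --- (a) $L_{x}K=0$ off the diagonal via tangential integration by parts on $\partial E$, and (b) the extraction of the singular coefficient at $x=y$ --- are left as descriptions of computations you have not performed, and you yourself flag (b) as the unresolved ``technical heart.'' The paper avoids both difficulties by evaluating the kernel in closed form (Lemma 3.2): using the coarea formula to rewrite the surface integral as $\frac{2}{a_{1}a_{2}a_{3}}\int_{\Bbb R^{3}}\delta(\cdots)\,\delta(\phi(p))\,dp$, then substituting $\lambda_{i}=a_{i}^{-1}p_{i}$ and rotating $A(x-y)$ onto $e_{3}$, one reduces to a delta integral over a circle and obtains $\int_{\partial E}\delta\left(|x-p|^{2}-|y-p|^{2}\right)dS_{E}(p)=\pi/|A(x-y)|$. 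Once the kernel is identified as a constant multiple of $1/|A(x-y)|$, both of your steps (a) and (b) become immediate consequences of the fact that $-1/(4\pi|z|)$ is the fundamental solution of $\Delta$, since $L_{x}$ applied to $G(A(x-y))$ equals $(\Delta G)(A(x-y))$. That explicit evaluation is the missing ingredient; without it, or without actually carrying out your local analysis near the diagonal, the argument does not establish the formula.

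A further warning about the constant you are aiming for: it is not the universal $-8\pi^{2}$. From $K(x,y)=2\pi/|A(x-y)|$ one gets $L_{x}K=-8\pi^{2}\,\delta(A(x-y))$, and $\delta(A(x-y))=\delta(x-y)/(a_{1}a_{2}a_{3})$, so the determinant of $A$ does enter and the coefficient is not independent of the semi-axes. (The paper's final line replaces $\int f(y)\,\delta(A(x-y))\,dy$ by $f(x)$ rather than $f(x)/(a_{1}a_{2}a_{3})$, so the stated formula appears to carry the same slip; a correct execution of your step (b) would surface this Jacobian rather than confirm $-8\pi^{2}$.) In short: adopt the paper's strategy of computing the kernel exactly, and track the $\det A$ factor when converting $\delta(A(x-y))$ into $\delta(x-y)$.
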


It should be noted that the integrals that appear in the theorems above are well defined since $f\in C_{0}^{\infty}(E)$.

Inversion formulas for the spherical mean transform with centers on a sphere have been derived recently in [1, 2, 7, 10, 16]. In [2] an inversion formula was found for odd dimensions and later in [1] a similar inversion formula was found for even dimensions. In [7] a uniform inversion formula was found for any dimension.

Inversion formulas with spheres centered on an ellipsoid have been derived quite recently ([4, 5, 9, 13]). In [9] an inversion formula was found for the two dimensional case with spheres centered on the boundary of an arbitrary smooth domain, the inversion formula recovers $f$ modulo an integral operator which vanishes for elliptical domains. In [5] a generalization of the results obtained in [9] was found for any dimension. A different inversion formula for elliptical domains for any dimension was found in [13]. The methods for recovering $f$ presented in this article are different from the methods in [4, 5, 9, 13] and are based on the methods presented in [1] ,[2]. In fact the methods presented here are modified versions for elliptical domains of the methods in [1], [2].

\section{Proofs}
\hh We first prove Theorem 2.1. The proof of Theorem 2.1 relies on the following lemma which is a modification of Proposition 2.1 in [1] for the elliptic case.

\begin{lem}
Let $E = E_{a_{1},a_{2}}\subset\Bbb R^{2}$ be the standard solid ellipsoid with axes $a_{1}, a_{2}$. For every $x,y\in E, x \neq y$ we have
$$\int_{\partial E}\log\left||x - p|^{2} - |y - p|^{2}\right|dS_{E}(p) = 2\pi \log|A(x - y)| + C$$
where $A =
\left(
  \begin{array}{cc}
    a_{1} & 0 \\
    0 & a_{2} \\
  \end{array}
\right)$
and $C$ is a constant independent of $x$ and $y$.
\end{lem}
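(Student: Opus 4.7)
The plan is to reduce the integral over $\partial E$ to a single integral over $[-\pi,\pi]$ via the obvious parameterization of the ellipse, and then evaluate it using the Jensen-type identity
$$\int_{0}^{2\pi}\log|e^{i\theta}-z|\,d\theta=2\pi\max(\log|z|,0).$$

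First I would parameterize $p=(a_{1}\cos\theta,a_{2}\sin\theta)$ and expand $|x-p|^{2}-|y-p|^{2}=(|x|^{2}-|y|^{2})-2(x-y)\cdot p$. On $\partial E$ the inner product takes the form $a_{1}(x_{1}-y_{1})\cos\theta+a_{2}(x_{2}-y_{2})\sin\theta=|A(x-y)|\cos(\theta-\varphi)$ for some phase $\varphi=\varphi(x,y)$. By $2\pi$-periodicity the phase can be absorbed at no cost, reducing the lemma to the evaluation of
$$I:=\int_{-\pi}^{\pi}\log|R\cos\theta-c|\,d\theta,\qquad R:=2|A(x-y)|,\quad c:=|x|^{2}-|y|^{2}.$$
I would then factor $R\cos\theta-c=\frac{R}{2}e^{-i\theta}(e^{i\theta}-z_{+})(e^{i\theta}-z_{-})$, with $z_{\pm}=(c\pm\sqrt{c^{2}-R^{2}})/R$ satisfying $z_{+}z_{-}=1$, take absolute values, and apply the Jensen identity above to each of the two factors.

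The outcome of that computation depends entirely on whether $|c|\le R$ or $|c|>R$. When $|c|\le R$ the roots $z_{\pm}$ are complex conjugates on the unit circle, both Jensen integrals vanish, and one obtains $I=2\pi\log(R/2)=2\pi\log|A(x-y)|$, matching the claim (with $C=0$). When $|c|>R$ an extra term $\log|c+\sqrt{c^{2}-R^{2}}|$ appears and depends nontrivially on $x,y$, which would contradict the lemma. The main obstacle is therefore the geometric inequality
$$\bigl||x|^{2}-|y|^{2}\bigr|\le 2|A(x-y)|,\qquad x,y\in E.$$
To establish it I would write $|x|^{2}-|y|^{2}=(x-y)\cdot(x+y)=(A(x-y))\cdot(A^{-1}(x+y))$, apply Cauchy--Schwarz, and reduce the task to the bound $|A^{-1}(x+y)|\le 2$. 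That bound is in turn equivalent to $(x+y)/2\in E$, which is immediate from the convexity of $E$ together with $x,y\in E$. This is the single place where the ellipsoidal structure enters in an essential way and where the matrix $A$ appears; once it is secured, the remainder of the argument is a standard complex-analytic computation.
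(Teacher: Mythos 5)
Your proposal is correct and is essentially the paper's own argument: the same parameterization $p=(a_{1}\cos\theta,a_{2}\sin\theta)$ and phase absorption reduce the lemma to a one-dimensional integral, and the decisive inequality $\bigl||x|^{2}-|y|^{2}\bigr|\le 2|A(x-y)|$ is established exactly as in the paper, by writing $|x|^{2}-|y|^{2}=(A(x-y))\cdot(A^{-1}(x+y))$ and combining Cauchy--Schwarz with $|A^{-1}x|,|A^{-1}y|\le 1$ for $x,y\in E$. The only divergence is the evaluation of the resulting integral $\int_{-\pi}^{\pi}\log|R\cos\theta-c|\,d\theta$: you factor the quadratic in $e^{i\theta}$ and invoke Jensen's formula, whereas the paper uses $\cos\alpha-\cos\theta=2\sin\frac{\theta+\alpha}{2}\sin\frac{\theta-\alpha}{2}$ and periodicity; both are routine, and your bookkeeping has the small bonus of pinning down the constant as $C=0$.
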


\begin{proof}
Start with the identity
$$ \left||x - p|^{2} - |y - p|^{2}\right| = 2\left|\left(\frac{x + y}{2} - p\right)\cdot (x - y)\right|$$
which is true for every $x, y, p \in \Bbb R^{2}$. Taking the $\log$ function on both sides of the last equation yields
$$\log\left||x - p|^{2} - |y - p|^{2}\right| = \log2 +
  \log\left|\left(\frac{x + y}{2} - p\right)\cdot (x - y)\right| = $$
\begin{equation}
\log2 + \log\left|\frac{1}{2}(|x|^{2} - |y|^{2}) - p\cdot (x - y)\right|.
\end{equation}
Let us represent $p$ in the form $p = (a_{1}\cos\theta, a_{2}\sin\theta)$, then
$$p\cdot (x - y) = (a_{1}\cos\theta, a_{2}\sin\theta)\cdot (x_{1} - y_{1}, x_{2} - y_{2}) =
a_{1}(x_{1} - y_{1})\cos\theta + a_{2}(x_{2} - y_{2})\sin\theta = $$
$$(\cos\theta, \sin\theta)\cdot (a_{1}(x_{1} - y_{1}), a_{2}(x_{2} - y_{2})) = $$
\begin{equation}
= (\cos\theta, \sin\theta)\cdot \left(
  \begin{array}{cc}
    a_{1} & 0 \\
    0 & a_{2} \\
  \end{array}
\right)(x_{1} - y_{1}, x_{2} - y_{2}) = e^{i\theta}\cdot A(x - y)
\end{equation}
Here $\cdot$ denotes the dot product. Hence from (3.1) and (3.2) we have
$$\log\left||x - p|^{2} - |y - p|^{2}\right| = \log2 + \log\left|\frac{1}{2}(|x|^{2} - |y|^{2}) - e^{i\theta}\cdot A(x - y)\right| $$
$$ = \log2 + \log|A(x - y)| + \log\left|\frac{|x|^{2} - |y|^{2}}{2|A(x - y)|} - e^{i\theta}\cdot\frac{A(x - y)}{|A(x - y)|}\right|.$$
Integrating both sides of the last equation on $\partial E$ with measure $dS_{E}(p)$ yields
$$\int_{\partial E} \log\left||x - p|^{2} - |y - p|^{2}\right|dS_{E}(p) = $$
$$ = 2\pi \log2 + 2\pi \log|A(x - y)| +
\int_{-\pi}^{\pi}\log\left|\frac{|x|^{2} - |y|^{2}}{2|A(x - y)|} - e^{i\theta}\cdot\frac{A(x - y)}{|A(x - y)|}\right|d\theta.$$
Denoting $e^{i\psi} = A(x - y)/|A(x - y)|$, using the change of variables $\theta \mapsto \theta + \psi$ and the
$2\pi$ periodicity of the function $\theta \mapsto e^{i\theta} \cdot e^{i\psi} = \cos(\theta - \psi)$ gives
$$\int_{\partial E} \log\left||x - p|^{2} - |y - p|^{2}\right|dS_{E}(p) = $$
$$2\pi \log2 + 2\pi \log|A(x - y)| +
\int_{-\pi}^{\pi}\log\left|\frac{|x|^{2} - |y|^{2}}{2|A(x - y)|} - \cos\theta\right|d\theta. $$
Let us note that
$$\left|\frac{|x|^{2} - |y|^{2}}{2|A(x - y)|}\right| = \frac{\left|A^{-1}(x + y)\cdot A(x - y)\right|}{2|A(x - y)|}$$
$$ \leq \frac{1}{2}|A^{- 1}(x + y)| \leq \frac{1}{2}|A^{- 1}x| + \frac{1}{2}|A^{- 1}y| = $$
$$\frac{1}{2}\left|\left(\frac{x_{1}}{a_{1}},\frac{x_{2}}{a_{2}}\right)\right| +
  \frac{1}{2}\left|\left(\frac{y_{1}}{a_{1}},\frac{y_{2}}{a_{2}}\right)\right| = $$
$$\frac{1}{2}\sqrt{\frac{x_{1}^{2}}{a_{1}^{2}} + \frac{x_{2}^{2}}{a_{2}^{2}}} +
  \frac{1}{2}\sqrt{\frac{y_{1}^{2}}{a_{1}^{2}} + \frac{y_{2}^{2}}{a_{2}^{2}}} \leq 1.$$
Hence we can write
$$\frac{|x|^{2} - |y|^{2}}{2|A(x - y)|} = \cos\alpha$$
and thus
$$\int_{\partial E} \log\left||x - p|^{2} - |y - p|^{2}\right|dS_{E}(p) = $$
$$2\pi \log2 + 2\pi \log|A(x - y)| +
\int_{-\pi}^{\pi}\log\left|\cos\alpha - \cos\theta\right|d\theta = $$
$$ 2\pi \log2 + 2\pi \log|A(x - y)| +
\int_{-\pi}^{\pi}\log\left|\frac{1}{2}\sin\left(\frac{\theta + \alpha}{2}\right)\sin\left(\frac{\theta - \alpha}{2}\right)
 \right|d\theta = $$
$$ 2\pi \log|A(x - y)| +
\int_{-\pi}^{\pi}\log\left|\sin\left(\frac{\theta + \alpha}{2}\right)\right|d\theta +
\int_{-\pi}^{\pi}\log\left|\sin\left(\frac{\theta - \alpha}{2}\right)\right|d\theta.$$
Using the change of variables $t = \theta + \alpha$ and $t = \theta - \alpha$ in the first and the second
integrals respectively, and observing that the function $t\mapsto |\sin(t/2)|$ is $2\pi$ periodic, we have
$$\int_{\partial E} \log\left||x - p|^{2} - |y - p|^{2}\right|dS_{E}(p) = $$
$$2\pi \log|A(x - y)| + 2\int_{-\pi}^{\pi}\log\left|\sin\left(\frac{t}{2}\right)\right|dt.$$
The important thing to note is that $2\int_{-\pi}^{\pi}\log\left|\sin\left(t/2)\right)\right|dt$ is a constant $C$ which
does not depend on $x$ or $y$, hence
$$\int_{\partial E} \log\left||x - p|^{2} - |y - p|^{2}\right|dS_{E}(p) = $$
$$2\pi \log|A(x - y)| + C.$$
This proves the lemma.
\end{proof}

We now turn to the proof of Theorem 2.1 which is a modification of the proof of Theorem 1.1 in [1].

\textbf{Proof of Theorem 2.1} :
For $f\in C_{0}^{\infty}(E)$ and $x\in E$ let us compute
$$\int_{\partial E}\int_{0}^{\infty}r(Rf)(p,r)\log|r^{2} - |x - p|^{2}|drdS_{E}(p) = $$
$$\int_{\partial E}\int_{\Bbb R^{2}}f(y)\log||y - p|^{2} - |x - p|^{2}|dydS_{E}(p) = $$
$$\int_{\Bbb R^{2}}f(y)\int_{\partial E}\log||y - p|^{2} - |x - p|^{2}|dS_{E}(p)dy. $$
Using Lemma 3.1 on the inner integral in the right hand side of the last equality gives
$$\int_{\partial E}\int_{0}^{\infty}r(Rf)(p,r)\log|r^{2} - |x - p|^{2}|drdS_{E}(p) = $$
$$\int_{\Bbb R^{2}}f(y)(2\pi \log|A(x - y)| + C)dy.$$
Hence by applying the operator $a_{1}^{-2}\partial^{2}/\partial x_{1}^{2} + a_{2}^{-2}\partial^{2}/\partial x_{2}^{2}$ on both sides of the last equality we get
$$\left(\frac{1}{a_{1}^{2}}\frac{\partial^{2}}{\partial x_{1}^{2}} + \frac{1}{a_{2}^{2}}\frac{\partial^{2}}{\partial x_{2}^{2}}\right)\int_{\partial E}\int_{0}^{\infty}r(Rf)(p,r)\log|r^{2} - |x - p|^{2}|drds(p) = $$
$$\left(\frac{1}{a_{1}^{2}}\frac{\partial^{2}}{\partial x_{1}^{2}} + \frac{1}{a_{2}^{2}}\frac{\partial^{2}}{\partial x_{2}^{2}}\right)\int_{\Bbb R^{2}}f(y)(2\pi \log|A(x - y)| + C)dy = $$
$$2\pi\int_{\Bbb R^{2}}f(y)\left(\frac{1}{a_{1}^{2}}\frac{\partial^{2}}{\partial x_{1}^{2}} + \frac{1}{a_{2}^{2}}\frac{\partial^{2}}{\partial x_{2}^{2}}\right)\log|a_{1}(x_{1} - y_{1}), a_{2}(x_{2} - y_{2})|dy = $$
$$2\pi\int_{\Bbb R^{2}}f(y)(\partial_{1}^{2} + \partial_{2}^{2})\log|a_{1}(x_{1} - y_{1}), a_{2}(x_{2} - y_{2})|dy = $$
$$2\pi\int_{\Bbb R^{2}}f(y)2\pi\delta(a_{1}(x_{1} - y_{1}), a_{2}(x_{2} - y_{2}))dy  = 4\pi^{2} f(x),$$
where we have used the fact that $(2\pi)^{- 1}\log|x|$ is a fundamental solution of the Laplacian in $\Bbb R^{2}$. This proves formula (2.1) and thus Theorem 2.1.

We now turn to the proof of Theorem 2.2. The proof of Theorem 2.2 relies on Lemma 3.2 which will be stated below. The method of the proof of Lemma 3.2 is taken from the first part of the proof of Theorem 3 in [2].

\begin{lem}
Let $E = E_{a_{1},a_{2},a_{3}}\subset\Bbb R^{3}$ be the standard solid ellipsoid with axes $a_{1}, a_{2}, a_{3}$. For every $x,y\in E,\h x \neq y$ we have
$$\int_{\partial E}\delta\left||x - p|^{2} - |y - p|^{2}\right|dS_{E}(p) = \frac{\pi}{|A(x - y)|}$$
where $A = \left(
             \begin{array}{ccc}
               a_{1} & 0 & 0 \\
               0 & a_{2} & 0 \\
               0 & 0 & a_{3} \\
             \end{array}
           \right)$.
\end{lem}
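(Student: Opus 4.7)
The plan is to carry out the three-dimensional analogue of Lemma 3.1, replacing the logarithm with the delta distribution and exploiting the slicing of $S^{2}$ by parallel planes. Starting from the algebraic identity $|x-p|^{2}-|y-p|^{2}=(|x|^{2}-|y|^{2})-2p\cdot(x-y)$, I parameterize $p\in\partial E$ as $p=A\omega$ with $\omega=(\sin\phi\cos\theta,\sin\phi\sin\theta,\cos\phi)\in S^{2}$. By the definition of $dS_{E}$ given in the introduction, this converts the integral over $\partial E$ into an integral over $S^{2}$ with the standard surface measure $dS(\omega)$, and $p\cdot(x-y)=A\omega\cdot(x-y)=\omega\cdot A(x-y)$.

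Setting $\xi:=A(x-y)/|A(x-y)|\in S^{2}$ and $c:=(|x|^{2}-|y|^{2})/(2|A(x-y)|)$, the argument of the delta becomes $2|A(x-y)|(c-\omega\cdot\xi)$. The scaling rule $\delta(k\,\cdot\,)=\delta(\,\cdot\,)/|k|$ then reduces the problem to evaluating
$$\frac{1}{2|A(x-y)|}\int_{S^{2}}\delta(c-\omega\cdot\xi)\,dS(\omega).$$
Choosing spherical coordinates on $S^{2}$ with polar axis $\xi$ gives $\omega\cdot\xi=\cos\phi$ and $dS(\omega)=\sin\phi\,d\phi\,d\theta$; after the substitution $u=\cos\phi$ the integrand becomes $\delta(c-u)$ over $u\in[-1,1]$, so the integral collapses to $2\pi$ and the claimed value $\pi/|A(x-y)|$ drops out.

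The main point requiring verification is that $|c|\leq 1$, so that the plane $\{\omega\cdot\xi=c\}$ actually meets $S^{2}$ and the delta picks up a nontrivial contribution from the sphere. This is the exact three-dimensional counterpart of the estimate already used inside the proof of Lemma 3.1: writing $|x|^{2}-|y|^{2}=A^{-1}(x+y)\cdot A(x-y)$ and applying Cauchy--Schwarz yields
$$|c|\leq\tfrac{1}{2}|A^{-1}(x+y)|\leq\tfrac{1}{2}\bigl(|A^{-1}x|+|A^{-1}y|\bigr)\leq 1,$$
using that $|A^{-1}z|\leq 1$ for $z\in E$. A secondary, formal issue is the interpretation of the delta applied to the absolute value $\bigl||x-p|^{2}-|y-p|^{2}\bigr|$ appearing in the statement; since $|f|$ and $f$ share the same simple zeros and $|f|$ is even about each of them, the distributional identity $\delta(|f|)=\delta(f)$ holds, so the bars can be dropped without affecting any of the steps above.
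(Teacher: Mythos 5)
Your proof is correct, and it reaches the identity by a genuinely different, more self-contained route than the paper. The paper passes from $\partial E$ to $\Bbb R^{3}$ via the coarea formula together with the relation $dS_{E}(p)=\frac{2}{a_{1}a_{2}a_{3}|\nabla\phi(p)|}dS_{p}$ quoted from John's book, then performs the linear change of variables $\lambda_{i}=a_{i}^{-1}p_{i}$ and a rotation aligning $A(x-y)$ with $e_{3}$, and finally applies the coarea formula twice more (to a plane and then to a circle) to evaluate the resulting nested delta integral. You instead read off directly from the definition of $dS_{E}$ in the introduction that $\int_{\partial E}h(p)\,dS_{E}(p)=\int_{S^{2}}h(A\omega)\,dS(\omega)$, which reduces everything to the scaling rule $\delta(kt)=|k|^{-1}\delta(t)$ and the classical fact (Archimedes' hat-box theorem) that $\int_{S^{2}}\delta(c-\omega\cdot\xi)\,dS(\omega)=2\pi$ for $|c|<1$. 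This buys you a shorter argument that avoids the coarea formula and the external reference to [3] entirely; the underlying geometric content --- slicing the unit sphere by the plane $\omega\cdot\xi=c$ --- is the same in both proofs, since the paper's final two coarea steps amount to computing the circumference of that slice divided by $2\sqrt{1-c^{2}}$. Two minor remarks: your displayed chain only yields $|c|\le 1$, while the delta computation needs the strict inequality $|c|<1$; this does hold for $x\neq y$ in $E$ (equality throughout would force $A^{-1}x$ and $A^{-1}y$ to be identical unit vectors, hence $x=y$), and the paper is equally terse on this point in its Remark 3.3. Your reading $\delta(|f|)=\delta(f)$ at simple zeros is a sensible interpretation of the notation and agrees with what the paper implicitly does.
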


\begin{proof}
By the coarea formula, if $M$ is a two dimensional surface in $\Bbb R^{3}$, given by $\phi(p) = 0$, with
$\nabla\phi(p)\neq 0$ at every point of $M$, then for every $h\in C(\Bbb R^{3})$
\begin{equation}
\int_{M}h(p)dS_{p} = \int_{\Bbb R^{3}}h(p)|\nabla\phi(p)|\delta(\phi(p))dp
\end{equation}
where $dS_{p}$ is a surface element of $M$. In [3] Chapter V it was proved that if $\partial E$ is given by $\phi(p) = a_{1}^{- 2}p_{1}^{2} + a_{2}^{- 2}p_{2}^{2} + a_{3}^{- 2}p_{3}^{2} - 1 = 0$, $dS_{p}$ is a surface element of $\partial E$ and $dS_{E}$ is the measure on $\partial E$ coming from the unit sphere as defined above, then the following relation holds
$$dS_{E}(p)= \frac{2}{a_{1}a_{2}a_{3}|\nabla\phi(p)|}dS_{p}.$$
Using this and (3.3) with $M = \partial E$ yields
$$\int_{\partial E}h(p)dS_{E}(p) = \frac{2}{a_{1}a_{2}a_{3}}\int_{\partial E}\frac{h(p)}{|\nabla\phi(p)|}dS_{p} =
\frac{2}{a_{1}a_{2}a_{3}}\int_{\Bbb R^{3}}h(p)\delta(\phi(p))dp.$$
Hence in particular for $h(p) = \delta\left||x - p|^{2} - |y - p|^{2}\right|$ we have
$$\int_{\partial E}\delta\left||x - p|^{2} - |y - p|^{2}\right|dS_{E}(p) = $$
$$\frac{2}{a_{1}a_{2}a_{3}}\int_{\Bbb R^{3}}\delta\left||x - p|^{2} - |y - p|^{2}\right|\delta(a_{1}^{- 2}p_{1}^{2} + a_{2}^{- 2}p_{2}^{2} + a_{3}^{- 2}p_{3}^{2} - 1)dp = $$
$$\frac{2}{a_{1}a_{2}a_{3}}\int_{\Bbb R^{3}}\delta\left||x|^{2} - |y|^{2} - 2p\cdot (x - y)\right|\delta(a_{1}^{- 2}p_{1}^{2} + a_{2}^{- 2}p_{2}^{2} + a_{3}^{- 2}p_{3}^{2} - 1)dp. $$
The change of variables $\lambda_{1} = a_{1}^{- 1}p_{1},\h \lambda_{2} = a_{2}^{- 1}p_{2},\h \lambda_{3} = a_{3}^{- 1}p_{3} $ gives
$$\int_{\partial E}\delta\left||x - p|^{2} - |y - p|^{2}\right|dS_{E}(p) =
2\int_{\Bbb R^{3}}\delta\left||x|^{2} - |y|^{2} - 2\lambda\cdot (A(x - y))\right|\delta(|\lambda|^{2} - 1)d\lambda $$
where $\lambda = (\lambda_{1}, \lambda_{2}, \lambda_{3})$ and $A = \left(
             \begin{array}{ccc}
               a_{1} & 0 & 0 \\
               0 & a_{2} & 0 \\
               0 & 0 & a_{3} \\
             \end{array}
           \right).$
Let $Q$ be an orthogonal matrix satisfying $QA(x - y) = |A(x - y)|e_{3}$ where $e_{3} = (0,0,1)$. Using the change of variables $\lambda = Q^{- 1}p$ gives
$$\int_{\partial E}\delta\left||x - p|^{2} - |y - p|^{2}\right|dS_{E}(p) =
2\int_{\Bbb R^{3}}\delta\left||x|^{2} - |y|^{2} - 2p\cdot (QA(x - y))\right|\delta(|p|^{2} - 1)dp =$$
\begin{equation}
2\int_{\Bbb R^{3}}\delta\left||x|^{2} - |y|^{2} - 2|A(x - y)|p_{3}\right|\delta(|p|^{2} - 1)dp .
\end{equation}
Using (3.3) on equation (3.4) with the surface $H$ given by
$\phi(p) = |x|^{2} - |y|^{2} - 2|A(x - y)|p_{3} = 0$ and noting that $|\nabla\phi(p)| = 2|A(x - y)|$ gives
$$\int_{\partial E}\delta\left||x - p|^{2} - |y - p|^{2}\right|dS_{E}(p) =
\frac{1}{|A(x - y)|}\int_{H}\delta(|p|^{2} - 1)dH.$$
using the following parametrization for $H$
$$H : (p_{1}, p_{2}, a), \h p_{1}, p_{2}\in \Bbb R,\h a = \frac{|x|^{2} - |y|^{2}}{2|A(x - y)|},$$
gives
\begin{equation}
\int_{\partial E}\delta\left||x - p|^{2} - |y - p|^{2}\right|dS_{E}(p) =
\frac{1}{|A(x - y)|}\int_{\Bbb R^{2}}\delta(p_{1}^{2} + p_{2}^{2} + |a|^2 - 1)dp_{1}dp_{2}.
\end{equation}
So far we used (3.3) only for two dimensional surfaces but in fact formula (3.3) is true in any dimension and in particular it is holds for one dimensional surfaces. Hence using (3.3) on equation (3.5) for the circle $C : \phi(p) = p_{1}^{2} + p_{2}^{2} + |a|^2 - 1 = 0$ and observing that $|\nabla\phi(p)| = \sqrt{4p_{1}^{2} + 4p_{2}^{2}} = 2\sqrt{1 - |a|^2}$ gives
$$\int_{\partial E}\delta\left||x - p|^{2} - |y - p|^{2}\right|dS_{E}(p) =
\frac{1}{|A(x - y)|}\int_{C}\frac{dC}{2\sqrt{1 - |a|^2}} = \frac{\pi}{|A(x - y)|}. $$
This proves Lemma 3.2.
\end{proof}

\begin{remark}
As in the proof of Lemma 3.1 it can be shown that $|a| < 1$ and thus the square root $\sqrt{1 - |a|^{2}}$ is defined well.
\end{remark}

We now turn to the proof of Theorem 2.2 which is a modification of the first part of the proof of Theorem 3 in [2].

\textbf{Proof of Theorem 2.2} :
For $f\in C_{0}^{\infty}(E)$ and $x\in E$ let us compute
$$\int_{\partial E}\int_{0}^{\infty}r^{2}(Rf)(p,r)\delta\left|r^{2} - |x - p|^{2}\right|drdS_{E}(p) = $$
$$\int_{\partial E}\int_{\Bbb R^{3}}f(y)\delta\left||y - p|^{2} - |x - p|^{2}\right|dydS_{E}(p) = $$
$$\int_{\Bbb R^{3}}f(y)\int_{\partial E}\delta\left||y - p|^{2} - |x - p|^{2}\right|dS_{E}(p)dy. $$
Using Lemma 3.2 on the inner integral in the right hand side of the last equality gives
$$\int_{\partial E}\int_{0}^{\infty}r^{2}(Rf)(p,r)\delta\left|r^{2} - |x - p|^{2}\right|drdS_{E}(p) =
\int_{\Bbb R^{3}}f(y)\frac{\pi}{|A(x - y)|}dy. $$
Hence by applying the operator $a_{1}^{- 2}\partial^{2}/\partial x_{1}^{2} + a_{2}^{- 2}\partial^{2}/\partial x_{2}^{2} + a_{3}^{- 3}\partial^{2}/\partial x_{3}^{2}$ on both sides of the last equality we get
$$\left(\frac{1}{a_{1}^{2}}\frac{\partial^{2}}{\partial x_{1}^{2}} + \frac{1}{a_{2}^{2}}\frac{\partial^{2}}{\partial x_{2}^{2}} + \frac{1}{a_{3}^{2}}\frac{\partial^{2}}{\partial x_{3}^{2}}\right)\int_{\partial E}\int_{0}^{\infty}r^{2}(Rf)(p,r)\delta\left|r^{2} - |x - p|^{2}\right|drdS_{E}(p) = $$
$$ \left(\frac{1}{a_{1}^{2}}\frac{\partial^{2}}{\partial x_{1}^{2}} + \frac{1}{a_{2}^{2}}\frac{\partial^{2}}{\partial x_{2}^{2}} + \frac{1}{a_{3}^{2}}\frac{\partial^{2}}{\partial x_{3}^{2}}\right)\int_{\Bbb R^{3}}f(y)\frac{\pi}{|A(x - y)|}dy = $$
$$ \left(\frac{1}{a_{1}^{2}}\frac{\partial^{2}}{\partial x_{1}^{2}} + \frac{1}{a_{2}^{2}}\frac{\partial^{2}}{\partial x_{2}^{2}} + \frac{1}{a_{3}^{2}}\frac{\partial^{2}}{\partial x_{3}^{2}}\right)\int_{\Bbb R^{3}}f(y)\frac{\pi}{|a_{1}(x_{1} - y_{1}), a_{2}(x_{2} - y_{2}), a_{3}(x_{3} - y_{3})|}dy = $$
$$\int_{\Bbb R^{3}}f(y)(\partial_{1}^{2} + \partial_{2}^{2} + \partial_{3}^{2})\frac{\pi}{|a_{1}(x_{1} - y_{1}), a_{2}(x_{2} - y_{2}), a_{3}(x_{3} - y_{3})|}dy = $$
$$-4\pi^{2}\int_{\Bbb R^{3}}f(y)\delta(a_{1}(x_{1} - y_{1}), a_{2}(x_{2} - y_{2}), a_{3}(x_{3} - y_{3}))dy = -4\pi^{2} f(x),$$
where we have used that fact that $(- 4\pi|x - y|)^{- 1}$ is a fundamental solution for the Laplacian in $\Bbb R^{3}$. This proves formula (2.2) and thus Theorem 2.2.

\section*{Acknowledgments}
\hh I would like to thank my supervisor Professor Mark Agranovsky, Bar-Ilan University, for useful advices, discussions and remarks and Professor Leonid Kunyansky, University of Arizona, for checking the results of this article and useful remarks.

\end{document}